\newtheorem{theorem}{Theorem}[section]
\newtheorem{proposition}[theorem]{Proposition}
\theoremstyle{definition}
\DeclareMathOperator{\tr}{tr}
\theoremstyle{remark}
\newtheorem{remark}[theorem]{Remark}
\numberwithin{equation}{section}
\title[Kauffman bracket versus Jones polynomial]{Kauffman bracket versus Jones polynomial skein modules}
\author{Shamon Almeida}
\address{Department of Mathematics and Statistics, Texas Tech University, Lubbock, TX 79409}
\email{shamon.almeida@gmail.com}
\author{R{\u{a}}zvan Gelca}
\address{Department of Mathematics and Statistics, 
Texas Tech University, Lubbock, TX 79409}
\email{rgelca@gmail.com}
\subjclass{Primary 57K31; Secondary 57K16}
\keywords{Kauffman bracket, Jones polynomial, skein modules, Chern-Simons theory}
\begin{document}

\begin{abstract}
  This paper resolves the problem of comparing the skein modules
  defined using the skein relations discovered
  by P.~Melvin and R.~Kirby  that underlie the quantum group based Reshetikhin-Turaev
  model for $SU(2)$ Chern-Simons theory
  to the Kauffman bracket skein modules.
  Several applications and examples are presented.
\end{abstract}

\maketitle

\section{Motivation}

In 1984 V.F.R.~Jones introduced a polynomial invariant of knots and links  \cite{jones}. Immediately
after, L.~Kauffman defined a similar polynomial knot and link invariant, the Kauffman bracket,
which is in fact an invariant of framed knots and links \cite{kauffman}. 
Kauffman has shown how the Jones polynomial of a knot
can be computed from the Kauffman bracket.

  In 1989 E.~Witten in \cite{witten1989} has explained the Jones polynomial 
        by means of  a quantum field theory based on the Chern-Simons functional. The Jones polynomial corresponds to the particular case of the
         Chern-Simons theory with gauge group $SU(2)$. By making
        use of physical intuition, Witten predicted the Jones polynomial to
 be part of a more general family of knot, link,  and manifold invariants.
 Motivated by Witten's ideas, Reshetikhin and Turaev constructed the  knot, link,
 and manifold invariants of the $SU(2)$ Chern-Simons theory
 using a quantum group associated to $sl(2,{\mathbb C})$ \cite{reshetikhinturaev}. This theory fulfills Witten's predictions. 
An analogous theory was developed for
the Kauffman bracket by Blanchet, Habegger, Masbaum, and Vogel
in \cite{bhmv}, and this theory parallels that of
Reshetikhin and Turaev. Each of these two parallel theories have lead to
significant developments and the aim of the present paper is to
explain the relationship between the two models at the most general level.

 Within the Reshetikhin-Turaev theory, and already present
in previous works by Reshetikhin himself, lies the Jones  polynomial of framed knots and links, but with a slightly different normalization.
This polynomial fits exactly the quantum field theoretical model from Witten's paper, it is the
polynomial that  Chern-Simons theory would associate to a link whose components are
colored by the $2$-dimensional irreducible representation of  $SU(2)$.
We will refer to this polynomial as the Jones polynomial in the 
  Reshetikhin-Turaev normalization (or simply the Jones polynomial,
  when there is no possibility of confusion). 
The coloring of a knot by the $n$-dimensional 
irreducible representation of $SU(2)$ yields a polynomial invariant of knots called the colored Jones polynomial, of which the Jones polynomial in the Reshetikhin Turaev normalization itself corresponds to $n=2$.
The convention is that the $n$th colored Jones polynomial of a knot $K$,
denoted by $J(K,n)$,   corresponds
to the coloring of $K$ by the $n+1$st irreducible representation.

\begin{figure}[h]
  \centering
   \scalebox{.4}{\input{crossing.pstex_t}}
%\scalebox{.9}{\input{lemmaA1.pstex_t}}
%\includegraphics{skeinsXY.pdf}}

\caption{}
\label{crossing}
\end{figure}

Let $M$ be a compact, orientable, 3-dimensional manifold $M$, on
which an orientation has been chosen. 
A framed link in
%the 3-dimensional sphere $S^3$, or more generally
%in a compact, orientable, 3-dimensional manifold
$M$ is an embedding
of finitely many annuli.

Let us discuss first the case $M=S^3$. Both the Kauffman bracket and the Jones
polynomial in the Reshetikhin-Turaev normalization of a framed knot or link
in $S^3$ can be computed using skein relations, and these skein relations
are quite similar. We denote the Kauffman
bracket of a link $L$ by $\left<L\right>$ and this version of the
Jones polynomial by $J_L$,
both in the variable $t$. 
 To write down the skein relations, let $L,H,V$ be
three framed links that coincide except in a ball where they are
as shown in Figure~\ref{crossing}. What this means is that
we have an orientation preserving embedding of the ball into $S^3$ such
that the preimage of the three links through this embedding look
as depicted in the diagrams. The Kauffman bracket has
the skein relations
\begin{eqnarray*}
\left<L\right>=t\left<H\right>+t^{-1}\left<V\right>, \quad \left<O\right>=-t^2-t^{-2}.
\end{eqnarray*}
Here and below $O$ is the unknot. The first equality expresses the
relation between the brackets of $L, H,$ and $V$, while the second expresses
the fact that every trivial link component can be replaced by multiplication
by the scalar $-t^2-t^{-2}$.

On the other hand, the skein relations of the
Jones polynomial in the Reshetikhin-Turaev normalization
have been computed by R.~Kirby
and P.~Melvin in \cite{kirbymelvin}; they  are
\begin{eqnarray*}
J_L=tJ_H+t^{-1}J_V\mbox{ or } J_L=\epsilon(tJ_H-t^{-1}J_V), \quad J_O=t^2+t^{-2}.
\end{eqnarray*}
There are two skein relations for resolving a crossing,
the one on the left is used when different link
components cross, meaning that the two crossing strands in the diagram $L$
from Figure~\ref{crossing} come from
different link components, and the skein relation on the right is used when
the diagram $L$ corresponds to a link component crossing itself, with $\epsilon$ being the sign of the crossing.
To compute $\epsilon$, one chooses any of the two possible orientations
of the link component, which then orients the two strands inside
the ball, and then the sign is computed using the right hand rule.
Both orientations of the link yield the same value for $\epsilon$. 

A great amount of Chern-Simons theory is dedicated to the study of the
combinatorial properties of knots and links decorated by irreducible representations of quantum groups (the so called quantized Wilson lines), and the
algebraic topological concept that lies at the heart of this study
is that of a skein module. 
Following J.~Przytycki \cite{przytycki},
we construct the skein module of a compact, orientable,
3-dimensional manifold $M$ on which an orientation has been fixed.
We do this by considering first the free
${\mathbb C}[t,t^{-1}]$-module with basis the isotopy classes of framed
links in $M$, and then factoring this module by the skein relations. In the case
of the Kauffman bracket we obtain the Kauffman bracket skein module
$K_t(M)$, obtained by factoring the above mentioned free module
by the submodule spanned by the elements of the form $L-tH-t^{-1}V$, where
$L, H, V$ are framed links that coincide except in a ball that is
embedded by an orientation preserving homeomorphism in which
they look as in Figure~\ref{crossing}, and also by the relation
that states that every link that contains a trivial link component (one that bounds a disk so that the framing is inside the disk) is
equivalent to the same link with that component erased,  multiplied by
$-t^2-t^{-2}$.

For the Jones polynomial, the  skein module
of $M$ was defined in \cite{gelcauribe}; it is denoted by $RT_t(M)$ to point
out that it comes from  the Reshetihin-Turaev theory.
It is  defined like for  the Kauffman bracket,
but with the Kirby-Melvin skein relations instead. We should point out that
the choice of the orientation of the manifold $M$ determines uniquely the
sign of the self-crossing of a link component, exactly like in the case
of $S^3$, and that this sign can be computed by choosing either of the
two orientations of the link component and then using the right hand rule
in the embedded ball. In \cite{gelcauribe} it was explained how
several constructs of $SU(2)$ Chern-Simons theory can be reduced
to these skein modules. 

For a better understanding of the need to introduce the skein modules
of the Reshetikhin-Turaev theory, let us contrast the two skein relations
in the so called ``classical case''. 
When $t=-1$, the Kauffman bracket skein relation yields the trace identity
for the negative of the trace of $sl(2,{\mathbb C})$ characters of
the fundamental group of $M$:
\begin{eqnarray*}
 ( -\tr\rho(\alpha\beta))+(-\tr\rho(\alpha))(-\tr\rho(\beta))+
(- \tr\rho(\alpha\beta^{-1}))=0, 
\end{eqnarray*}
as it has been noticed in \cite{bullock}. On the other hand, the skein
relation of Kirby and Melvin yields, when $t=1$, the trace identity
for the trace itself
\begin{eqnarray*}
 \tr\rho(\alpha\beta)-\tr\rho(\alpha)\tr\rho(\beta)+
\tr\rho(\alpha\beta^{-1})=0. 
\end{eqnarray*}
In Chern-Simons theory $t=e^{i\pi h}$, where $h$ is interpreted, depending
on the context, as either
the coupling constant or Planck's constant. 
Setting $t=1$ is equivalent to setting the coupling constant or Planck's constant equal to zero, and this is predicted to correspond to
the classical (nonquantized) situation, that is to the character
variety. This physical interpretation, and the fact that it
is more natural to work with the trace than the negative of the trace,
are one of the reasons that we have  proposed in our previous work  the study of the skein modules $RT_t(M)$
of the Jones polynomial. The other reason is that the fundamental facts of $SU(2)$ Chern-Simons theory (the
Murakami Theorem \cite{murakami}, the Volume Conjecture \cite{murakami2}, the AJ Conjecture \cite{garle}) are phrased in the quantum group setting.
But other constructs (such as the quantum Teichm\"uller theory \cite{bonahonwong}, \cite{fkl}) are phrased in the  Kauffman bracket setting,
and  the present  paper  clarifies  the relationship
between the two types of skein theories: $K_t(M)$ and $RT_t(M)$.

\section{The main result}

Let $M$  be a compact, orientable 3-dimensional manifold on which an
orientation has been chosen, and let
$L$ be a framed link in $M$. Consider a compact orientable 3-dimensional
manifold $N$ such that $\partial N=-\partial M$, and
consider the closed manifold $M\cup N$ obtained by gluing $M$ and
$N$ along their common boundary.

The 3-dimensional manifold $M\cup N$, being closed, can be obtained from $S^3$ by performing surgery along
a framed link $L'$. Without loss of generality we may assume
that the solid tori of the surgery along $L'$ are disjoint from $L$. As such, $M\cup N$
is the boundary of a 4-dimensional manifold $W'$ obtained by gluing 2-handles to the 4-dimensional
ball $B^4$. Let us further glue 2-handles   to $W'$ along the components of the framed link $L$ to obtain
a 4-dimensional manifold $W$. Note that $W$ is obtained by gluing
$2$-handles to $B^4$ as specified by the framed links $L$ and $L'$ (both of which can now be viewed
as embedded annuli in $S^3$), so that 2-dimensional
disks are glued along the actual link components. These 2-handles define  homology classes in $H_2(W,{\mathbb Z})$,
which homology classes are determined by the closed surfaces obtained by capping each disk by a
Seifert surface in $S^3$ of the corresponding link component. Now let us focus only on the homology classes
classes in $H_2(W,{\mathbb Z})$ determined by the link components of $L$, and let us denote by $\tr(L)$ the
trace of the intersection matrix of these homology classes.
This
trace is the sum of $[L_j]\cdot [L_j]$ over the components $L_j$ of
$L$, where $[L_j]\cdot [L_j]$ is the algebraic intersection number
of the homology class $[L_j]$ defined by $L_j$ with itself.

In earnest, the
intersection form on $H_2(W, {\mathbb Z})$ depends on an additional piece
of information: the orientation of the surfaces that are being intersected.
When restricted to the homology classes that arise from the link components
of $L$, that additional piece of information is encoded in an orientation
of the link components. But the elements on the diagonal of this
matrix do not depend on the orientation, they compute self-crossings, and
so $\tr(L)$ is well defined, and can be computed by choosing any such
orientation.

Note that $\tr(L)$ depends on the choice of $N$ and $W$,
but this fact does not alter the conclusion of the following theorem,
and in practical applications one should  always make the simplest choice.

Additionally, for a link $L$, we denote by $n(L)$ the number of components of $L$.

\begin{theorem}\label{maintheorem}
  The equality
  \begin{eqnarray*}
\sum_{k=1}^n c_kL_k=0
  \end{eqnarray*}
holds in $K_t(M)$  for some Laurent polynomials $c_k\in {\mathbb C}[t,t^{-1}]$ and some framed links
  $L_k$ in $M$ if and only if the  equality
  \begin{eqnarray*}
\sum_{k=1}^n(-1)^{n(L_k)+\tr(L_k)}c_kL_k=0
  \end{eqnarray*}
  holds in $RT_t(M)$.
\end{theorem}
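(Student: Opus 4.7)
The plan is to introduce a $\mathbb{C}[t,t^{-1}]$-linear map $\Phi$ on the free module generated by isotopy classes of framed links in $M$, defined by $\Phi(L)=(-1)^{n(L)+\tr(L)}L$, and to prove that $\Phi$ carries the submodule of Kauffman bracket skein relations onto the submodule of Kirby--Melvin skein relations. Since the signs are $\pm 1$, the map $\Phi$ is an involution on the free module, so once it is shown to exchange the two relation submodules it descends to mutually inverse isomorphisms $K_t(M)\cong RT_t(M)$, proving both directions of the theorem at once. The task thus reduces to checking, for each generating Kauffman relation, that its image under $\Phi$ lies in the submodule of Kirby--Melvin relations, and conversely.

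The main verification is a parity count of $n+\tr$ under the local moves, using two standard facts about framed links: the self-linking of a component equals the writhe of its self-crossings in a diagram, and $\mathrm{lk}(A,B)=\tfrac{1}{2}\sum\epsilon(c)$ summed over crossings between two distinct components $A$ and $B$. For a \emph{different-component crossing} with sign $\epsilon_c$, both smoothings merge $A$ and $B$ into a single component, so $n$ drops by one and the merged component has framing $f_A+f_B+2\mathrm{lk}(A,B)-\epsilon_c$; hence $\tr$ changes by an odd integer, $n+\tr$ has the same parity on $L$, $H$, $V$, and $\Phi$ carries the Kauffman relation $L-tH-t^{-1}V=0$ to the identical Kirby--Melvin relation in $RT_t(M)$. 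For a \emph{self-crossing} of a component $A$, one smoothing keeps $A$ as a single piece with new framing $f_A-\epsilon_c$, while the other splits $A$ into $A_1\sqcup A_2$ with total framing $f_A-\epsilon_c-2\mathrm{lk}(A_1,A_2)$; the parity of $n+\tr$ flips in the first case and is preserved in the second. Consequently $\Phi$ transforms Kauffman's $L-tH-t^{-1}V=0$ into $L\mp tH\pm t^{-1}V=0$, which is precisely the Kirby--Melvin self-crossing relation $L=\epsilon(tH-t^{-1}V)$ once the identification of horizontal versus vertical smoothing with keep versus split is paired, by the standard orientation convention, with the sign $\epsilon_c$ of the crossing. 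The trivial-loop relation is the cleanest check: adjoining a zero-framed unknot $O$ raises $n$ by one without changing $\tr$, so $\Phi$ flips the overall sign and converts Kauffman's $L\cup O+(t^2+t^{-2})L=0$ into the $RT_t(M)$ relation $L\cup O-(t^2+t^{-2})L=0$.

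The principal obstacle will be handling the dependence of $\tr(L_k)$ on the auxiliary choice of the filling manifold $N$ and the associated surgery presentation of $M\cup N$. Although $\tr(L_k)$ itself changes under different choices, one must verify that the twisted equality $\sum(-1)^{n(L_k)+\tr(L_k)}c_kL_k=0$ in $RT_t(M)$ is independent of these choices. The argument should reduce to showing that under handle slides and alternative surgery presentations, each diagonal self-intersection $[L_{k,j}]\cdot[L_{k,j}]$ changes by an even integer, or by an integer shift that is uniform across the components $L_{k,j}$ in a given relation, so that the net effect is at worst an overall sign preserving the vanishing of the twisted sum. Modulo this subtlety, the proof is exactly the case-by-case parity analysis sketched above.
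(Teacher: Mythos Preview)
Your proposal is correct and follows essentially the same route as the paper: the paper's proof is exactly the three-case parity check of $n(L)+\tr(L)$ under (i) a crossing between distinct components, (ii) a self-crossing, and (iii) removal of a trivial component, reaching the same conclusions you reach. Your packaging of this as an involutive module map $\Phi$ on the free module of framed links is a clean way to phrase what the paper calls ``the statement is invariant under skein relations''; the computations are identical, and in fact your treatment of the split case in (ii) tracks the $2\,\mathrm{lk}(A_1,A_2)$ term more carefully than the paper, which simply writes $\tr(H)=\tr(V)=\tr(L_k)\mp 1$ (correct mod $2$, which is all that matters).

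Your worry about the dependence of $\tr$ on the auxiliary choices $N$ and $W$ is not actually an obstacle. The parity argument goes through verbatim for \emph{any} fixed choice of $N$ and $W$, so the theorem holds for each such choice; there is nothing further to verify to establish the stated equivalence. The paper handles this in one line after the proof, remarking that varying $N$ and $W$ at most multiplies the entire twisted equation by a global sign, and advises making the simplest choice in practice. You can drop the last paragraph of your write-up, or replace it with that one-sentence observation.
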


\begin{proof}
  Note that the write down these formulas we have to implicitly choose
  some orientations of the the link components, but the
  formulas themselves, and the proof below, do not depend on these
  orientations. 
  All we have to show is that the statement of the theorem
  is invariant under skein relations. We have to examine three
  cases. 

  {\em Case 1.} If two components $L_\alpha$ and $L_\beta$ of one of the links
  $L_k$ cross, then after resolving the crossing the number of components
  dropped by $1$. On the diagonal of the intersection matrix the
  entries $[L_\alpha]\cdot [L_\alpha]$ and  $[L_\beta]\cdot [L_\beta]$
  disappear, and the entry
  \begin{eqnarray*}
    [L_\alpha]\cdot [L_\alpha]+ [L_\beta]\cdot [L_\beta]+2[L_\alpha]\cdot [L_\beta]-1
  \end{eqnarray*}
  appears,   thus the exponent $n(L_k)+\tr(L_k)$ changes by an even number.
  And indeed, the skein relation for two disjoint components that cross
  is the same for the Kauffman bracket and for the Jones polynomial in the
  Reshetikhin-Turaev normalization.

  {\em Case 2.} If a  component $L_\alpha$ of some link  $L_k$ crosses itself,
  the crossing can be positive or negative. Let $H$ and $V$ be
  the diagrams obtained after resolving the crossing. If the crossing is
  positive, then  $\tr(V)=\tr(H)=\tr(L_k)-1$, and $V$ has the same
  number of components as $L_k$ while the $H$ term has one component more.
  Thus when passing from the Kauffman bracket to the Jones polynomial
  we keep the same sign in front of $H$, while we change the sign
  in front of $V$, exactly as in the skein relation for the Jones polynomial
  in the Reshetikhin-Turaev normalization. If the crossing
  is negative, then $\tr(V)=\tr(H)=\tr(L_k)+1$, but this time
  the number of components stays the same in $H$ and increases in $V$.
  And this is again consistent with the skein relation.

  {\em Case 3.} If we remove a trivial component, then the Kauffman
  bracket is multiplied by $-t^2-t^{-2}$, while the Jones polynomial
  is multiplied by $t^2+t^{-2}$. In this case the number of link components
  decreases by $1$, and so the exponent of $-1$ decreases by $1$, changing
  the sign of the corresponding term. The theorem is proved. 
    \end{proof}

  If we vary $N$ and $W$ we just multiply by a $\pm 1$ the
 entire  second equation from the statement. 

\begin{remark}
  If $M\subset S^3$, we can choose $N=\overline{S^3\backslash M}$ and let $W$
  be the $4$-dimensional ball that $S^3$ bounds. Then the intersection
  matrix whose trace is $\tr(L)$ is just the linking matrix of $L$.

  Note that you can swap the two relations in order to pass from
  $RT_t(M)$ to $K_t(M)$. 
\end{remark}

If we work over the field of fractions ${\mathbb C}(t)$, we obtain
the following immediate corollary.

\begin{proposition}
The vector spaces $K_t(M)$ and $RT_t(M)$ are isomorphic.
\end{proposition}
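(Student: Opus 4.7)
The plan is to exhibit the isomorphism explicitly by packaging Theorem \ref{maintheorem} as a sign twist on the basis of framed links. First I would fix auxiliary data once and for all: a manifold $N$ with $\partial N=-\partial M$, a surgery presentation of $M\cup N$, and the resulting $4$-manifold $W$. With these choices the integer $\tr(L)$ becomes a genuine function of the isotopy class of each framed link $L$ in $M$. Let $F$ be the free $\mathbb{C}(t)$-vector space on isotopy classes of framed links in $M$, and define a linear map $\phi\colon F\to F$ on basis elements by $\phi(L)=(-1)^{n(L)+\tr(L)}L$.

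Since $\phi$ merely rescales each basis vector by $\pm 1$, it is an involution and therefore a vector space automorphism of $F$. Let $K,R\subset F$ denote the submodules cut out by the Kauffman bracket relations and by the Kirby--Melvin relations, so that $K_t(M)=F/K$ and $RT_t(M)=F/R$. Theorem \ref{maintheorem} states precisely that $\sum c_k L_k\in K$ if and only if $\sum (-1)^{n(L_k)+\tr(L_k)}c_kL_k\in R$; equivalently, $\phi(K)\subseteq R$, and applying $\phi$ a second time gives the reverse inclusion, so $\phi(K)=R$. Hence $\phi$ descends to a linear isomorphism $F/K\to F/R$, which is the desired isomorphism $K_t(M)\cong RT_t(M)$.

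The only real subtlety is the well-definedness of $\phi$ on isotopy classes, since $\tr(L)$ a priori depends on the auxiliary data. As the remark following Theorem \ref{maintheorem} makes clear, different choices simply multiply whole relations by $\pm 1$, so fixing one global choice of $N$ and $W$ turns $(-1)^{n(L)+\tr(L)}$ into a bona fide invariant of the isotopy class $[L]$. Once this bookkeeping is in place, Theorem \ref{maintheorem} supplies all the nontrivial content of the proof, and no further computation is required; the passage to the field of fractions is only needed to phrase the statement as an identification of vector spaces, as the same argument actually gives an isomorphism of $\mathbb{C}[t,t^{-1}]$-modules.
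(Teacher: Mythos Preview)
Your argument is correct and in fact proves more than the paper does: by exhibiting an explicit automorphism $\phi$ of the free module $F$ that carries the Kauffman relation submodule onto the Kirby--Melvin one, you obtain an isomorphism of $\mathbb{C}[t,t^{-1}]$-modules, not just of $\mathbb{C}(t)$-vector spaces. The paper takes a different, non-constructive route: it selects a basis of $K_t(M)$ consisting of framed links (this is where passing to the field is needed), uses Theorem~\ref{maintheorem} to see that the same set spans $RT_t(M)$, and then argues that if it failed to be linearly independent in $RT_t(M)$ a proper subset would be a basis there and hence, by Theorem~\ref{maintheorem} again, would already span $K_t(M)$, contradicting independence. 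The paper's approach is shorter and sidesteps the bookkeeping of making $(-1)^{n(L)+\tr(L)}$ a genuine isotopy invariant, while yours yields the explicit map and the stronger module-level statement. One small wording issue: the phrase ``applying $\phi$ a second time gives the reverse inclusion'' does not literally follow from $\phi(K)\subseteq R$ and $\phi^2=\mathrm{id}$ alone (that combination only gives $K\subseteq\phi(R)$), but since you have already invoked the full biconditional of Theorem~\ref{maintheorem}, both inclusions $\phi(K)\subseteq R$ and $R\subseteq\phi(K)$ are available directly, and the conclusion $\phi(K)=R$ stands.
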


\begin{proof}
  As isotopy classes of framed links span $K_t(M)$,
  we can find a basis consisting of
  framed links. But then this basis is a spanning set for $RT_t(M)$. It
  is either a basis, or it contains a basis. If it is not a basis,
  then the basis it contains is a spanning set for $K_t(M)$, a contradiction.
  Thus any basis of framed links of $K_t(M)$ is a basis of framed links of $RT_t(M)$.
  Hence the conclusion. 
\end{proof}

\section{Applications and examples}

Let us introduce the polynomials 
$T_n(\xi)=2\cos [n\arccos (\xi/2)]$, $n\in {\mathbb Z}$,  which is  a normalized version of
the Chebyshev polynomial polynomial of the first kind, and
$S_n(\xi)=\sin [(n+1)\arccos(\xi/2)]/\sin \arccos(\xi/2)$, $n\in {\mathbb Z}$,
which is a normalized  version
  of the Chebyshev polynomial of the second kind. 
  For a framed knot $K$ in some compact, oriented, 3-dimensional manifold $M$
  and a positive integer $m$, we let $K^m$ be the framed
  link consisting of $m$ parallel copies of $K$, where in order to
  produce the parallel copies $K$ is pushed in the direction of the framing.
  Given a framed link  $L=L_1\cup L_2\cup \cdots\cup  L_k$   and a $k$-tuple of positive integers, $(j_1,j_2,...,j_k)$, 
  we can construct the link $L_1^{j_1}\cup L_2^{j_2}\cup\cdots \cup L_k^{j_k}$
  by taking parallel copies of each component.

  In particular, for a knot $K$ we can construct the skeins $T_n(K)$
  and $S_n(K)$ in either $K_t(M)$ or $RT_t(M)$. 

  \subsection{The product-to-sum formula and Weyl quantization}
Let us give another reason for our focus on the skein modules
of the Jones polynomial in the Reshetikhin-Turaev normalization.
If a manifold is a cylinder over a surface,
then the operation of gluing one cylinder on top of the other induces an
algebra structure on the skein module; this is the skein algebra of the surface.
Of particular interest is the skein algebra of the 
the torus,  $K_t({\mathbb T}^2)$. As a module, it is free with basis
$(p,q)_T$, $p,q\in {\mathbb Z}$, $p\geq 0$, where $(p,q)_T=T_n((p/n,q/n))$,
with $n$  the greatest common divisor of $p$ and $q$ and
$(p/n,q/n)$  the curve of slope $q/p$ on the torus whose framing is parallel to
the torus. 

As shown in \cite{frohmangelca} and \cite{gelcauribe}, for both the Kauffman bracket and the Jones polynomial in the Reshetikhin-Turaev normalization,
the multiplication is
given by the product-to-sum formula
\begin{eqnarray*}
(p,q)_T(r,s)_T=t^{ps-qr}(p+r,q+s)_T+t^{-ps+qr}(p-r,q-s)_T.
\end{eqnarray*}
  
For a manifold with boundary, the operation of gluing a cylinder over the boundary to the 3-dimensional manifold induces a module structure on
its skein module, over the
skein algebra of the boundary. A situation that was investigated in \cite{frohmangelca} and \cite{gelcauribe2} is that where the manifold is the solid torus.
Let $\alpha$ be the curve that is the core of the solid torus (the image
of $(1,0)$ under the inclusion of the boundary). The following result was
proved in \cite{gelcauribe2} and \cite{gelcauribe}. 

\begin{proposition}\label{weylformula}
  In the case of the Jones polynomial in the Reshetikhin-Turaev normalization, the action
  of the skein algebra of the cylinder over the
  torus on the skein module of the solid torus is given by
\begin{eqnarray}\label{weyl1}
  (p,q)_TS_{j-1}(\alpha)=t^{-pq}[t^{2jq}S_{j-p-1}(\alpha)+t^{-2jq}S_{j+p-1}(\alpha)],
\end{eqnarray}
\end{proposition}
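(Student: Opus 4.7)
My plan is to obtain formula~\eqref{weyl1} as a direct application of Theorem~\ref{maintheorem}, starting from the corresponding product-to-sum identity in the Kauffman bracket skein module of the solid torus, which was established in \cite{frohmangelca}. Since the Kauffman bracket analog of~\eqref{weyl1} has the same shape but with signs dictated by the Kauffman bracket skein relations, Theorem~\ref{maintheorem} supplies exactly the sign corrections $(-1)^{n(L)+\tr(L)}$ needed to pass to $RT_t$.

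To carry this out, I would first embed the solid torus in $S^3$ as a tubular neighborhood of an unknot, so that by the Remark following Theorem~\ref{maintheorem} the quantity $\tr(L)$ may be read as the trace of the linking matrix of $L\subset S^3$. With all framings taken parallel to the boundary torus, the core $\alpha$ has zero self-linking and zero mutual linking between its parallel copies, hence $n(\alpha^m)+\tr(\alpha^m)=m$. The $(p,q)$-curve contributes its torus self-linking on each strand, and its cross-linking with each copy of $\alpha$ equals $p$. Because $S_k(\alpha)$ and $T_n(\beta)$ are Chebyshev polynomials involving only monomials of a fixed parity in the number of strands, the sign $(-1)^{n(L)+\tr(L)}$ is in fact constant on the monomials making up each of the skein elements $(p,q)_T\cdot S_{j-1}(\alpha)$, $S_{j-p-1}(\alpha)$, and $S_{j+p-1}(\alpha)$, so that it is well defined on these elements as a whole. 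Once this is established, applying Theorem~\ref{maintheorem} term by term, and dividing through by the common sign attached to the left-hand side, produces precisely~\eqref{weyl1}.

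The main obstacle is the sign bookkeeping. Individually each computation of $n+\tr$ is elementary, but the assertion that both right-hand side terms in~\eqref{weyl1} acquire the \emph{same} sign (so that both end up with a plus) is a nontrivial parity check; it reduces to a congruence among $pq$, $jq$ and $j\pm p$ modulo $2$ that one must track carefully. A secondary technical point is verifying that $(-1)^{n(L)+\tr(L)}$ is truly invariant along the Chebyshev recursions defining $S_k(\alpha)$ and $T_n(\beta)$, so that one may legitimately speak of its value on these polynomial skein elements; this invariance is in essence what Cases~1 and~2 in the proof of Theorem~\ref{maintheorem} already record, so no new ingredient is needed.
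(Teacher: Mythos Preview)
The paper does not actually prove Proposition~\ref{weylformula}; it is quoted from \cite{gelcauribe2} and \cite{gelcauribe}, where it was established directly in the Reshetikhin--Turaev setting. The paper then runs the logic in the direction \emph{opposite} to yours: taking~\eqref{weyl1} as given, it applies Theorem~\ref{maintheorem} to deduce the Kauffman bracket analogue (the proposition immediately following), obtaining the same formula with an extra overall factor $(-1)^q$.

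Your plan---starting from the Kauffman bracket solid-torus formula and transporting it to $RT_t$ via Theorem~\ref{maintheorem}---is mathematically sound. The parity bookkeeping you outline is precisely the computation the paper performs (in reverse) in its proof of the Kauffman version: the congruence $p'q'+1-p'\equiv q'\pmod 2$ for coprime $p',q'$ is exactly what makes the two right-hand terms pick up the same sign and collapses the discrepancy to the single factor $(-1)^q$. Since Theorem~\ref{maintheorem} is an if-and-only-if statement, the argument runs equally well in either direction. The one genuine caveat is that you need the Kauffman bracket formula as an \emph{independent} input. Within the paper's internal logic that formula is itself a corollary of Proposition~\ref{weylformula}, so invoking it here would be circular; you should check that \cite{frohmangelca} really supplies a direct Kauffman-bracket derivation of the solid-torus action (as opposed to only the product-to-sum formula in the skein algebra of ${\mathbb T}^2$), or else provide that derivation yourself.
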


A  consequence of Theorem~\ref{maintheorem} is the following.

\begin{proposition} For the Kauffman bracket, the action of the skein algebra
  of the cylinder over the torus on the skein module of the solid torus is
  given by 
\begin{eqnarray*}
  (p,q)_TS_{j-1}(\alpha)=(-1)^qt^{-pq}[t^{2jq}S_{j-p-1}(\alpha)+t^{-2jq}S_{j+p-1}(\alpha)].
\end{eqnarray*}
\end{proposition}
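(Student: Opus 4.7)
My plan is to derive this identity from Proposition~\ref{weylformula} by applying Theorem~\ref{maintheorem}, which converts any identity $\sum c_kL_k=0$ in $RT_t(M)$ into one in $K_t(M)$ by multiplying each framed link $L_k$ by $(-1)^{n(L_k)+\tr(L_k)}$. I embed the solid torus $M$ standardly in $S^3$, so that by the remark following Theorem~\ref{maintheorem}, $\tr(L)$ is simply the trace of the linking matrix of $L$ in $S^3$.

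First I expand the Chebyshev polynomials on both sides of the $RT_t$ identity into ${\mathbb C}[t,t^{-1}]$-combinations of honest framed links. Writing $n=\gcd(p,q)$ and $\gamma=(p',q')$ for the primitive curve with $(p,q)=n(p',q')$, the left-hand side $T_n(\gamma)S_{j-1}(\alpha)$ expands into links $\gamma^k\alpha^m$ and the right-hand side expands into links $\alpha^m$. Only the diagonal of the linking matrix enters $\tr$: the core $\alpha$ has self-linking $0$ (its framing is the longitude, which bounds in $S^3\setminus M$) while the torus curve $\gamma=(p',q')$ with torus framing has self-linking $p'q'$. Hence $n(\gamma^k\alpha^m)+\tr(\gamma^k\alpha^m)=k(1+p'q')+m$, so Theorem~\ref{maintheorem} multiplies the link $\gamma^k\alpha^m$ by $(-1)^{k(1+p'q')+m}$ and the link $\alpha^m$ by $(-1)^m$.

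Next I resum using the parity identities $T_n(-X)=(-1)^nT_n(X)$ and $S_\ell(-X)=(-1)^\ell S_\ell(X)$. The $(-1)^m$ factor pulls out of $S_{j-1}(\alpha)$ as $(-1)^{j-1}$ and out of $S_{j\pm p-1}(\alpha)$ as $(-1)^{j\pm p-1}$; the $(-1)^{k(1+p'q')}$ factor is trivial when $p'q'$ is odd and contributes $(-1)^n$ (via $T_n(-\gamma)$) when $p'q'$ is even. Using $(-1)^{j+p-1}=(-1)^{j-p-1}$ (since $2p$ is even), the two right-hand terms acquire a common sign, so solving for $(p,q)_TS_{j-1}(\alpha)$ leaves an overall scalar of $(-1)^p$ in the odd case and $(-1)^{n+p}$ in the even case.

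The main obstacle is the parity bookkeeping needed to identify each of these scalars with $(-1)^q$. In the odd case, $p'$ and $q'$ are both odd, so $p\equiv n\equiv q\pmod 2$ and $(-1)^p=(-1)^q$. In the even case, $\gcd(p',q')=1$ forces exactly one of $p',q'$ to be even, so $p'+q'$ is odd; then $n+p\equiv n(1+p')\pmod 2$ and $q\equiv nq'\pmod 2$ differ by $n(1+p'+q')\equiv 0\pmod 2$, so $(-1)^{n+p}=(-1)^q$ here as well. Either way the surviving scalar is $(-1)^q$, establishing the claimed formula.
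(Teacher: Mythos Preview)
Your argument is correct and follows essentially the same route as the paper: apply Theorem~\ref{maintheorem} to the identity of Proposition~\ref{weylformula}, compute $n(L)+\tr(L)$ for each link using that a component $(p',q')$ with torus framing has self-linking $p'q'$ while $\alpha$ has self-linking $0$, and then finish with a parity check on $p',q'$. The only cosmetic difference is that you split early into the cases $p'q'$ odd/even and invoke the parity identities $T_n(-X)=(-1)^nT_n(X)$, $S_\ell(-X)=(-1)^\ell S_\ell(X)$ explicitly, whereas the paper keeps a single exponent $np'q'+n+j-1$ throughout and reduces everything to the congruence $p'q'+1-p'\equiv q'\pmod 2$; both routes amount to the same computation.
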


\begin{proof}
Let $p=np'$, and $q=nq'$, with $p',q'$ coprime. Then
$(p,q)_T$ is a linear combination of links, each of which having
the number of components congruent to $n$ modulo $2$. Each of these components
is a copy of the curve  of slope $q/p$ on the torus.
Because we work with the blackboard framing of the torus, each
component contributes $(p'-1)q'+q'=p'q'$ to $\tr(L)$, and so modulo $2$, each
term of $(p,q)_T$ contributes $np'q'$ to the trace.
And $S_{j-1}(\alpha)$ contributes nothing to the exponent of -1 in the
formula from Theorem~\ref{maintheorem}. Also,
modulo $2$, the number of link components in  $S_k(\alpha)$ is $k$. Thus
when switching from the Jones polynomial picture to the Kauffman bracket
picture, 
(\ref{weyl1}) becomes
\begin{eqnarray*}
  (-1)^{np'q'+n+j-1}  (p,q)_TS_{j-1}(\alpha)=t^{-pq}[t^{2jq}(-1)^{j-np'-1}S_{j-p-1}(\alpha)\\
    +(-1)^{j+np'-1}t^{-2jq}S_{j+p-1}(\alpha)],
\end{eqnarray*}
An easy case check shows that if $p',q'$ are coprime then
$p'q'+1-p'\equiv q'(\mbox{mod }2)$, so $np'q'+n-np'\equiv nq'(\mbox{mod }2)$ and
the formula is proved.
\end{proof}

We point out that for a curve $\gamma$, in
the setting of the Reshetikhin-Turaev theory
$S_j(\gamma)$ corresponds to $\gamma$ colored by the $j+1$-dimensional
irreducible representation, $V^{j+1}$, of the corresponding quantum group
 (which we denote by $V^{j+1}(\gamma)$),
while in the setting of
the Kauffman bracket it corresponds
to the coloring of the curve by the $j$th Jones-Wenzl idempotent.
So the relation from Proposition~\ref{weylformula} has the nicer form
\begin{eqnarray*}
 (p,q)_TV^j(\alpha)=t^{-pq}[t^{2jq}V^{j-p}(\alpha)+t^{-2jq}V^{j+p}(\alpha)].
  \end{eqnarray*}
This equation has been related by the second author and
A.~Uribe \cite{gelcauribe2} to the action of the Heisenberg group on 
theta functions discovered by A.~Weil \cite{weil}, and as such to
the Weyl quantization of the moduli space of $SU(2)$ connections
on the tours, and this gives a second reason for our focus on the skein
modules of the Jones polynomial.
More explicitly, the moduli space in question is the ``pillow case'' obtained
by factoring the complex plane ${\mathbb C}$ by the maps $z\mapsto z+m+ni$,
$m,n\in {\mathbb Z}$ and $z\mapsto -z$. To perform geometric quantization
we let Planck's constant be the reciprocal of an even integer $h=(2r)^{-1}$,
and let $\zeta_j$ be the sections of the Chern-Simons line
bundle over the moduli space that are lifted to the plane as the entire functions as
\begin{eqnarray*}
  \zeta_j=\sqrt[4]{r}e^{-\frac{j^2\pi}{2r}}(\theta_j-\theta_{j}), \quad \theta_j(z)=
  \sum_{n=-\infty}^\infty e^{-\pi(2rn^2+2jn)+2\pi i z(j+2rn)}.
\end{eqnarray*}
Then we let $C(p,q)$ be the operator associated by peforming equivariant
Weyl quantization to the function $2\cos (2\pi(px+qy))$ on the pillow case
(here $z=x+iy$). A computation with integrals yields
\begin{eqnarray*}
C(p,q)\zeta_j=t^{-pq}[t^{2jq}\zeta_{j-p}+t^{-2jq}\zeta_{j+p}], \mbox{ where } t=e^{\frac{i\pi}{2r}},
\end{eqnarray*}
which has been interpreted as saying that the Weyl quantization and the
quantum group quantization of the moduli space of flat $SU(2)$ connections
on the torus coincide. So this result  puts the accent on the use of
the skein modules $RT_t(M)$.

\subsection{The skein module of the complement of the $(2p+1,2)$ torus
  knot}
Let us now show an example that arises in the search for patterns in
skein modules. Computations with skeins have exponential complexity,
and these computations are expected to yield complicated results. Sometimes, for
apparently no reason, the result of a lengthy computation produces a
simple formula. This is the case with the following
example, which we will examine, for
comparison, in both situations.
The Kauffman bracket skein module of the complement  $S^3\backslash N(T_{2p+1,2})$ of
a regular neighborhood of the  $(2p+1,2)$ torus knot $T_{2p+1,2}$ is free
with basis $x^ny^k$, $n\geq 0$, $0\leq k\leq p$, as it was shown
by D.~Bullock in \cite{bullock2},
where  $x$ and $y$  are  depicted in Figure~\ref{torusknot} and are
endowed with the blackboard framing.
  \begin{figure}[h]
\centering
\scalebox{.4}{\input{torusknot.pstex_t}}

\caption{}%$x,y$,$z$ curves in the knot complement}
\label{torusknot}
    \end{figure}
 Then $RT_t(S^3\backslash T_{2p+1,2})$  is also free, with the same basis. 
 Indeed, using Theorem~\ref{maintheorem} and Bullock's result
 we conclude that every skein
        in $RT_t(S^3\backslash T_{2p+1,2})$ is a linear combination of the elements
        $x^ny^k$, $n\geq 0, 0\leq k\leq p$. And any nontrivial linear combination
        equal to $0$ in $RT_t(S^3\backslash T_{2p+1,2})$ would yield a nontrivial linear
        combination equal to zero in $K_t(S^3\backslash T_{2p+1,2})$, which is impossible. 
  
  For the Kauffman bracket skein module, the following  surprising
  formula was discovered
  by J.~Sain in \cite{gelcasain2}
   \begin{eqnarray*}
   t^{-2i-1}S_{p+i}(y)+t^{2i+1}S_{p-i-1}(y)=(-1)^iS_{2i}(x)(t^{-1}S_p(y)+tS_{p-1}(y)),
   \end{eqnarray*}
   for $i=1,2,...,p+1$, 
   which allows the reduction of higher ``powers''  of $y$ to lower powers.
   By Theorem~\ref{maintheorem}, in the quantum group setting of the Reshetikhin-Turaev theory we
	have the slightly simpler identity
	\begin{eqnarray*}
	t^{-2i-1}V^{p+i+1}(y)-t^{2i+1}V^{p-i}(y)=V^{2i+1}(x)(t^{-1}V^{p+1}(y)-tV^p(y)).
	\end{eqnarray*}
	
  % in the skein module $RT_t(S^3\backslash N(T_{2p+1,2}))$
  % we have the slightly more elegant formula
  %  \begin{eqnarray*}
  %  t^{-2i-1}S_{p+i}(y)+t^{2i+1}S_{p-i-1}(y)=S_{2i}(x)(t^{-1}S_p(y)+tS_{p-1}(y)).
  % \end{eqnarray*}

    \subsection{The colored Jones polynomials and the noncommutative
    A-polynomial of a knot}
  If $K\subset S^3$ is a framed knot with framing zero, then, in $RT_t(S^3)$,
	the skein   $S_n(K)$ is equal to the colored Jones polynomial of $K$ corresponding to the
  coloring of $K$ by the $n+1$st irreducible representation of the
  the quantum group of $SU(2)$ multiplied by the empty link:
  \begin{eqnarray*}
S_n(K)=J(K,n)\emptyset.
    \end{eqnarray*}
  Theorem~\ref{maintheorem} shows that if
  we evaluate $S_n(K)$ in the Kauffman bracket skein module $K_t(S^3)$ instead, we obtain
  $(-1)^{n}J(K,n)\emptyset$, because the trace of each term of $S_n(K)$
  is zero and the number of componets is congruent to $n$ modulo 2. In
  other words, the $n$th colored Jones polynomial is equal to $(-1)^{n}$
  times the $n$th colored Kauffman bracket:
  \begin{eqnarray*}
J(K,n)=(-1)^{n}\left<S_{n}(K)\right>
    \end{eqnarray*}
  a fact that is being used widely (see for example \cite{lepaper}). 

  There are two versions of the definition of the noncommutative
  generalization of the A-polynomial of a knot and the aim of this
  paragraph is to give a better understanding of the relationship between
  the two. The first was defined by the second author in joint work
  with Ch.~Frohman and W.~Lofaro in \cite{frgelo} and is based on the
  Kauffman bracket. The construction uses the action of the Kauffman
  bracket skein algebra of the cylinder over the torus, $K_t({\mathbb T}^2)$,
    on the Kauffman bracket skein module $K_t(S^3\backslash(N(K))$
    of the   complement
    of the regular neighborhood of a knot $K$, which arises from
    gluing the cylinder to the knot complement. The annihilator
    of the empty link, which is a left ideal in $K_t({\mathbb T}^2)$, is
    called the peripheral ideal of the knot and is denoted by ${\mathcal I}_t(K)$. It consists of the linear
    combinations of framed
    curves on the boundary torus that become equal to zero when ``pushed''
    inside the skein module of the knot complement. If we extend this
    ideal to a left ideal in the ring $${\mathbb C}_t[l,l^{-1},m,m^{-1}]={\mathbb C}\left<l,l^{-1},m,m^{-1}\right>/(lm=t^2ml)$$
    using the inclusion of  $K_t({\mathbb T}^2)$ into this latter ring defined
    by $(1,0)\mapsto l+l^{-1}$, $(0,1)\mapsto (0,1)$ (see \cite{frohmangelca}), then restrict it
    to ${\mathbb C}_t[l,m]$, we obtain what is called the non-commutative A-ideal of $K$ \cite{frgelo}. 
		The reason for the definition is that for $t=-1$ this ideal is principal, and modulo a normalization,
		it is generated by the A-polynomial defined in \cite{ccgls}.
                Moreover, it has been observed in \cite{frgelo} and \cite{gelca} 
		that every element in the non-commutative A-ideal yields a recursive relation for
		the colored Kauffman brackets $<S_n(K)>=(-1)^nJ(K,n)$. 

  The second construction of the noncommutative generalization of the
  A-polynomial has its origin in \cite{garle} and is based on quantum groups,
  being therefore related to the Jones polynomial in the Reshetikhin-Turaev
  normalization. The idea is to view the family of colored Jones polynomials
  as a function $f:{\mathbb Z}\rightarrow {\mathbb C}[t,t^{-1}]$, $f(n)=J(K,n)$
  and consider the operators $L$ and $M$ on such functions
  $Lf(n)=f(n+1)$ and $Mf(n)=t^{2n}f(n)$. These operators satisfy $LM=t^{2}ML$,
  and so they generate the ring
  \begin{eqnarray*}
 {\mathbb C}_t[L,L^{-1},M,M^{-1}]={\mathbb C}\left<L,L^{-1},M,M^{-1}\right>/(LM=t^2ML).
    \end{eqnarray*}
  The recurrence ideal of the knot $K$ is the left ideal consisting of the
  polynomials $P(L,M)$ satisfying $P(L,M)f=0$, where $f$ is the function defined
  above. It has been shown in \cite{garle} that this ideal is always nonzero.
  The two constructions are related because any recursive relation
  for $<S_n(K)>=(-1)^nJ(K,n)$ can be transformed into a recursive relation
  for $J(K,n)$, so every element in the peripheral ideal defined in
  \cite{frgelo} can be transformed into
  an element in the recurrence ideal, but this transformation is somewhat ad hoc 
  because it requires several sign adjustments.

  However, if we use for the definition of the noncommutative A-ideal
	the skein modules of the Jones polynomial, thus
  working instead with the action of $RT_t({\mathbb T}^2)$
  on $RT_t(S^3\backslash N(K))$, then  the ideal resulting
  from extending the peripheral ideal to ${\mathbb C}_t[l,l^{-1},m,m^{-1}]$
  and then restricting to ${\mathbb C}_t[l,m]$ is automatically
  included in the recurrence ideal under the identification
  $l=L,m=M$; no more change of signs. 
  Moreover, Theorem~\ref{maintheorem} implies that
  to pass from the peripheral ideal for the case of the Kauffman bracket
  to that for the case of the Jones polynomial in the Reshetikhin-Turaev normalization, one has to
  substitute each $(p,q)_T$ by $(-1)^p(p,q)_T$.

\subsection{The skein module of the  complement of the figure-eight knot}
We  illustrate  the facts that we have just  discussed 
with the example of the figure-eight knot. Let therefore
 $K_8$ be the figure-eight knot and let
 $N(K_8)$ be one of its open regular neighborhoods. %Then the figure-eight knot complement $M=S^3 \backslash N(K_8)$ is a compact orientable manifold whose boundary is a torus.
Consider the left action of the skein algebra of the torus,
$RT_t(\mathbb{T}^2)$, on $RT_t(S^3\backslash N(K_8))$ %denoted by "$\cdot$" and
defined by gluing the cylinder over the torus to the boundary of the knot
complement such that the curve $(1,0)$ is identified with the longitude  and the curve $(0,1)$ is identified with the meridian of the knot. %This turns
	%$RT_t(M)$ into an $RT_t(\mathbb{T}^2)$-module.
	To understand the $RT_t(\mathbb{T}^2)$-module structure of $RT_t(S^3\backslash N(K_8))$, we need to explicate the action
	of the elements $(p,q)_T$ from the boundary.

        The Kauffman bracket skein module
	of the figure-eight knot complement was found by D.~Bullock and W.~Lofaro
	in \cite{bullocklofaro} to be free  with basis $x^n,x^ny,x^ny^2$ where $n \geq 0$, or equivalently $x^n,x^ny,x^nz$, where $n \geq 0$, the framed curves
        $x,y$ and $z$ being shown in Figure~\ref{figureeight} and being
        endowed with the blackboard framing.
        For the same reason as in the case of the torus knots
        discussed above,  
        $RT_t(S^3\backslash N(K_8))$ is also free, with the same basis.
        % $x^n,x^ny,x^ny^2$ where $n \geq 0$ (or equivalently $x^n,x^nz,x^nz^2$ where $n \geq 0$), where $x,y$ and $z$ are shown in Figure~\ref{figureeight}. 
	\begin{figure}[h]
		\centering
		\scalebox{.4}{\input{figureeight.pstex_t}}
		
		\caption{}%$x,y$,$z$ curves in the knot complement}
		\label{figureeight}
	\end{figure}

%For the Jones polynomial case we have
%\begin{eqnarray*}
%y^2=t^{-2}x^2y+t^{-6}z-2t^{-4}x^2+t^{-4}+1, z^2=t^2x^2z+t^6y-2t^4x^2+t^4+1
%\end{eqnarray*}
%while for the Kauffman bracket we have
%\begin{eqnarray*}
%y^2=-t^{-2}x^2y-t^{-6}z-2t^{-4}x^2+t^{-4}+1, z^2=-t^2x^2z-t^6y-2t^4x^2+t^4+1.
%\end{eqnarray*}

      From the work in \cite{gelcasain} one can infer that the
        action of the algebra $K_t({\mathbb T}^2)$ on $K_t(S^2\backslash N(K_8))$ is
        determined by the following 
  \begin{eqnarray*}
           &&  (1,q)_T\emptyset=t^{q}[(t^2S_{2+q}(x)+t^{-2}S_{-q}(x))Y+(t^{2}S_{q}(x)+
              t^{-2}S_{2-q}(x))Z\\&& \quad -(t^2S_{-4-q}(x)+t^{-2}S_{-4+q}(x))],\\
    && (1,q)_TY=t^{q+4}[-(t^2S_{4+q}(x)+t^{-2}S_{-4-q}(x))Y-(t^2S_{q+2}(x)\\
      &&\quad +t^{-2}S_{-q}(x))Z  +(t^2S_{-6-q}(x)+t^{-2}S_q(x))],\\
    && (1,q)_TZ=t^{q-4}[-(t^{2}S_q(x)+t^{-2}S_{2-q}(x))Y-(t^2S_{-4+q}(x)\\
      &&\quad +t^{-2}S_{4-q}(x))Z +t^{q-4}(t^{-2}S_{-6+q}(x)+t^{2}S_{-q}(x))].
        \end{eqnarray*}
  where $Y=t^2y+1$, $Z=t^{-2}z+1$ and $q\in {\mathbb Z}$.
  We point out that the action of $K_t({\mathbb T}^2)$ on elements of the form
  $x^n$, as well as on $x^ny$ and $x^nz$ for $n>0$ can be derived from these
  using the product-to-sum formula.  This also only explicates the action of elements of the form
$(1,q)_T$, the product-to-sum formula allows the computation of the action
of a general $(p,q)_T$, albeit without a nice closed form formula. 

Applying Theorem~\ref{maintheorem}, and noticing that the computation of the signs requires
 just the counting link components modulo 2,
we obtain the following result.  
  
\begin{theorem}
  The   action of $RT_t({\mathbb T}^2)$ on $RT_t(S^2\backslash N(K_8))$ is
        determined by
          \begin{eqnarray*}
         &&   (1,q)_T\emptyset=t^q[(t^2S_{2+q}(x)+t^{-2}S_{-q}(x))Y+(t^{2}S_{q}(x)+
              t^{-2}S_{2-q}(x))Z\\
              &&\quad +(t^2S_{-4-q}(x)+t^{-2}S_{-4+q}(x))],\\
            &&  (1,q)_TY=t^{q+4}[(t^2S_{4+q}(x)+t^{-2}S_{-4-q}(x))Y+(t^2S_{q+2}(x)\\ &&\quad +t^{-2}S_{-q}(x))Z  +(t^2S_{-6-q}(x)+t^{-2}S_q(x))],\\
            &&  (1,q)_TZ=t^{q-4}[(t^{2}S_q(x)+t^{-2}S_{2-q}(x))Y+(t^2S_{-4+q}(x)\\
              &&\quad +t^{-2}S_{4-q}(x))Z +t^{q-4}(t^{-2}S_{-6+q}(x)+t^{2}S_{-q}(x))].
          \end{eqnarray*}
          where $Y=t^2y-1$, $Z=t^{-2}z-1$ and $q\in {\mathbb Z}$. 
        \end{theorem}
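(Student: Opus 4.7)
The plan is to apply Theorem~\ref{maintheorem} term-by-term to each of the three Kauffman-bracket action formulas cited from \cite{gelcasain}. Each side of every such formula expands as a linear combination of framed links in $S^3 \setminus N(K_8)$, and the passage from $K_t$ to $RT_t$ consists in multiplying every such link $L$ by its sign $(-1)^{n(L)+\tr(L)}$, then regrouping the right-hand sides in terms of $Y=t^2y-1$ and $Z=t^{-2}z-1$ instead of the Kauffman combinations $t^2y+1$ and $t^{-2}z+1$.

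To evaluate the signs, choose $N=N(K_8)$ so that $M\cup N=S^3$ and $W$ is the $4$-ball; then $\tr(L)$ is the sum of the self-linking numbers of the components of $L$ in $S^3$, read off from their blackboard framings. From Figure~\ref{figureeight}, the curves $x$, $y$, $z$ and their parallel copies have zero writhe, so every basis element $S_k(x)$, $S_k(x)y$, $S_k(x)z$ has $\tr=0$. For the boundary curve $(1,q)_T$ with its torus-parallel framing, the self-linking computation in the proof of Proposition~\ref{weylformula} gives $\tr \equiv q \pmod 2$. Combined with the fact used there that the number of components of $S_k(\alpha)$ is $\equiv k \pmod 2$, this yields signs $(-1)^k$ on $S_k(x)$, $(-1)^{k+1}$ on $S_k(x)y$ and $S_k(x)z$, and $(-1)^{1+q}$ on $(1,q)_T\emptyset$.

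Expanding each Kauffman formula monomially in the basis $\{S_k(x),\,S_k(x)y,\,S_k(x)z\}$, applying the sign rule, factoring a common $(-1)^{q+1}$ (or $(-1)^q$ in the other two equations), and regrouping according to $Y=t^2y-1$, $Z=t^{-2}z-1$, yields the stated formulas. The reason the ``$+1$'' inside $Y_{\mathrm{KB}}$ becomes ``$-1$'' inside $Y$ is that $S_k(x)y$ has one more component modulo~$2$ than $S_k(x)$, so the two halves of $S_k(x)Y_{\mathrm{KB}}$ receive opposite signs and repackage as $S_k(x)Y$; the same mechanism explains the flipped signs in front of the $Y$ and $Z$ terms in the formulas for $(1,q)_T Y$ and $(1,q)_T Z$, and the minus-to-plus flip on $(t^2 S_{-4-q}(x)+t^{-2}S_{-4+q}(x))$ in the $(1,q)_T\emptyset$ formula. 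A representative check is that after sign adjustment the left-hand side $t^2(1,q)_Ty - (1,q)_T$ naturally recombines as $(1,q)_T\,Y$ with $Y=t^2y-1$, confirming the new basis is the right one.

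The main obstacle is purely algebraic bookkeeping: there are roughly ten Chebyshev monomials per equation, some indexed by negative integers, and at each step one must carefully reconcile the sign produced by Theorem~\ref{maintheorem} with the sign of the coefficient already present. No idea beyond Theorem~\ref{maintheorem} is required; the proof is a careful accounting exercise done once for the $(1,q)_T\emptyset$ relation and twice more for the $(1,q)_T Y$ and $(1,q)_T Z$ relations.
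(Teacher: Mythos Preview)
Your proposal is correct and follows essentially the same route as the paper: apply Theorem~\ref{maintheorem} to each of the three Kauffman-bracket relations from \cite{gelcasain}, track the sign $(-1)^{n(L)+\tr(L)}$ on every monomial, and regroup so that $t^2y+1$ and $t^{-2}z+1$ become $t^2y-1$ and $t^{-2}z-1$. The paper's proof is the one-line remark ``applying Theorem~\ref{maintheorem}, and noticing that the computation of the signs requires just the counting of link components modulo~2''; your write-up is simply a more explicit version of that same computation, and in fact your inclusion of the trace contribution $\tr((1,q))\equiv q\pmod 2$ is needed to make the left- and right-hand signs agree, so your extra care there is warranted rather than superfluous.

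One small point: you invoke the self-linking computation from the solid-torus proposition to get $\tr((1,q))\equiv q$, but that proposition concerns the boundary of an \emph{unknotted} solid torus, whereas here the torus is $\partial N(K_8)$. The conclusion is still correct---$(1,q)$ on $\partial N(K)$ with torus-parallel framing is a copy of $K$ with framing $q$, hence self-linking $q$---but you should say this directly rather than cite the other proof.
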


 %       This should be contrasted with the relations obtained in
 %       \cite{gelcasain}.
 % \begin{eqnarray*}
 %          && (1,q)_T\emptyset=t^q[(t^2S_{2+q}(x)+t^{-2}S_{-q}(x))Y+(t^{2}S_{q}(x)+
  %            t^{-2}S_{2-q}(x))Z\\&& \quad -(t^2S_{-4-q}(x)+t^{-2}S_{-4+q}(x))],\\
   %        && (1,q)_TY=t^{q+4}[-(t^2S_{4+q}(x)+t^{-2}S_{-4-q}(x))Y+(t^2S_{q+2}(x)+t^{-2}S_{-q}(x))Z\\ &&\quad  +(t^2(S_{-6-q}(x)+t^{-2}S_q(x))],\\
  % && (1,q)_TZ=t^{q-4}[-(t^{2}S_q(x)+t^{-2}S_{2-q}(x))Y+(t^2S_{-4+q}(x)+t^{-2}S_{4-q}(x))Z\\
  %    && \quad +t^{q-4}(t^{-2}S_{-6+q}(x)+t^{-2}S_{-q}(x))].
   %     \end{eqnarray*}
 % One should notice in each formula a sign difference, but one should also
 % keep in mind that in the case of the Kauffman bracket $Y=t^2y+1$ while
 % for the Jones polynomial $Y=t^2y-1$ and also  $Z=t^{-2}z+1$ for the Kauffman
 % bracket while for the Jones polynomial $Z=t^{-2}z-1$.
Using this module structure, after a tedious computation,  one  obtains the following example of an element
in the peripheral ideal of $K_8$ in the version that uses the Jones polynomial in
the Reshetikhin-Turaev normalization: 
  \begin{eqnarray*}
  &&t^{-6}(2,3)_T-t^6(2,-1)_T-t^3(1,7)_T+t(1,5)_T+(t^{11}-t^3+t^{-1}+t^{-5})(1,3)_T\\
    && +(-t^9+t^5+t^{-7})(1,1)_T+(t^{11}-2t^7-t^3+t^{-1}-t^{-9})(1,-1)_T\\
    &&+(-t^{13}-t)(1,-3)_T+t^{-1}(1,-5)_T+t^8(0,7)_T+(-2t^8+t^4-t^{-4})(0,5)_T\\
    &&+(-t^{12}+t^8-t^4-1+t^{-4})(0,3)_T+(t^{12}-t^8+1+t^{-4})(0,1)_T.
  \end{eqnarray*}
This should be contrasted with 
   \begin{eqnarray*}
  &&t^{-6}(2,3)_T-t^6(2,-1)_T+t^3(1,7)_T-t(1,5)_T+(-t^{11}+t^3-t^{-1}-t^{-5})(1,3)_T\\
     && +(t^9-t^5-t^{-7})(1,1)_T+(-t^{11}+2t^7+t^3-t^{-1}+t^{-9})(1,-1)_T\\
     &&+(t^{13}+t)(1,-3)_T-t^{-1}(1,-5)_T+t^8(0,7)_T+(-2t^8+t^4-t^{-4})(0,5)_T\\
     &&+(-t^{12}+t^8-t^4-1+t^{-4})(0,3)_T
  +(t^{12}-t^8+1+t^{-4})(0,1)_T 
   \end{eqnarray*}
   which was obtained in  \cite{gelcasain} as  an element of
   the peripheral ideal defined using the Kauffman bracket.
   The former gives rise to the following  recursive relation for colored
   Jones polynomials $y_n=J(K_8,n)$ of the figure-eight knot
\begin{eqnarray*}
  &&(t^{6n+6}-t^{-2n+2}) y_{n+2}+(-t^{14n+24}+t^{10n+16}+t^{6n+20}-t^{6n+12}+t^{6n+8}\\
  &&+t^{6n+4}
  -t^{2n+12}+t^{2n+8}
  +t^{2n-4}+t^{-2n+8}-2t^{-2n+4}-t^{-2n}+t^{-2n-4}\\
  &&-t^{-2n-12}-t^{-6n+4}-t^{-6n-8}+t^{-10n-16})y_{n+1}
  +(t^{14n+22}-2t^{10n+18}\\&&+t^{10n+14}
  -t^{10n+6}-t^{6n+18}+t^{6n+14}-t^{6n+10}-t^{6n+6}+t^{6n+2}
  +t^{2n+14}\\
  &&-t^{2n+10}+t^{2n+2}+t^{2n-2}+t^{-2n+10}-t^{-2n+6}+t^{-2n-2}+t^{-2n-6}-t^{-6n+6}\\
  &&+t^{-6n+2}-t^{-6n-2}-t^{-6n-6}+t^{-6n-10}-2t^{-10n-2}+t^{-10n-6}-t^{-10n-14}\\
  &&+t^{-14n-6})y_n
  +(t^{10n+4}-t^{6n+16}-t^{6n+4}+t^{2n+12}-2t^{2n+8}-t^{2n+4}+t^{2n}\\&&-t^{2n-8}
  -t^{-2n+8}+t^{-2n+4}
  +t^{-2n-8}+t^{-6n+8}-t^{-6n}+t^{-6n-4}+t^{-6n-8}\\&&+t^{-10-4}
  -t^{-14n-4})y_{n-1}
  +(t^{2n+6}+t^{-6n-6})y_{n-2}=0.
\end{eqnarray*}
If we use the construction based on the Kauffman bracket, we obtain
 a recursive relation for $(-1)^nJ(K,n)$ instead.


\begin{thebibliography}{000}

  \bibitem{bhmv} Ch.~Blanchet, N.~Habegger, G.~Masbaum, P.~Vogel, \emph{Topological quantum field theories derived from the Kauffman bracket}, Topology, {\bf 31}(1992), 685--699. 

\bibitem{bonahonwong} F. Bonahon, H. Wong, \emph{Representations of the Kauffman bracket
skein algebra I: invariants and miraculous cancellations}, Invent. Math. 204(2016), no. 1, 195--243.

\bibitem{bullock} D. Bullock, {\em 
 Rings of $Sl_2(\mathbb{C})$-characters and the 
Kauffman bracket skein module}, Comment. Math. Helv.
                {\bf  72} (1997), no. 4, 521--542.

              \bibitem{bullock2} D. Bullock, {\em The $(2,\infty)$-skein module of
                the complement of a $(2,2p+1)$ torus knot}, J. Knot Theory
                Ramif., {\bf 4} (1995), no.4, 619--632. 
                
\bibitem{bullocklofaro} D. Bullock, W. Lofaro, {\em The Kauffman bracket skein module of a twist knot
exterior}, Algebraic and Geometric Topology, {\bf 5}(2005), 107--118. 

\bibitem{ccgls} D. Cooper, M. Culler, H. Gillet, D.D. Long, P.B. Shalen, {\em Plane curves associated
to character varieties of 3-manifolds}, Inventiones Math., {\bf 118} (1994), 47--84.  	

	\bibitem{frohmangelca}   Ch. Frohman, R. Gelca,  \emph{Skein modules and the noncommutative torus}, Trans. AMS, 352(2000), 4877-4888.
	
	\bibitem{frgelo} Ch. Frohman, R. Gelca, W. Lofaro, 
	\emph{The A-polynomial from the noncommutative viewpoint}, Trans. AMS, S 354(2002), 735--747.
	
	\bibitem{fkl} C. Frohman, J. Kania-Bartoszyńska, Thang T. Q. Lê, \emph{Unicity for representations of the Kauffman bracket skein algebra}, Inventiones Math., {\bf 215}(2019), 609--650.
	
	\bibitem{garle} S. Garoufalidis, T.T.Q. Le, \emph{The colored Jones function is $q$-holonomic}, Geom. Topol.,
	{\bf 9} (2005), 1253--1293. 
	
% 	\bibitem{gelcatrefoil} R. Gelca, \emph{Non-commutative trigonometry and the A-polynomial of the trefoil knot}, Math. Proc. Camb. Phil. 
%	Soc., {\bf 133}(2002), 311--323.

\bibitem{gelca} R. Gelca, \emph{On the relation between the A-polynomial and the Jones polynomial}, Proc. AMS,
{\bf 4} (2001) Vol. 130, 1235--1241. 

       \bibitem{gelcasain2} R. Gelca, J. Sain,  \emph{The noncommutative A-ideal of a (2p+1,2)-torus knot determines its Jones polynomial}, J. Knot Theory and
  Ramif., {\bf 2}(2003), 187--201. 
 

\bibitem{gelcasain} R. Gelca, J. Sain, \emph{The computation of the
  noncommutative A-ideal for the figure eight knot}, J. Knot Theory and
  Ramif., {\bf 6}(2004), 1--24. 


\bibitem{gelcauribe2} R. Gelca, A. Uribe, \emph{The Weyl quantization and the quantum group quantization of the moduli space of flat SU(2)-connections on the torus are the same}, Commun. Math. Phys., {\bf 233} (2003), 493--512.
  
  \bibitem{gelcauribe} R. Gelca, A. Uribe, \emph{Quantum mechanics and non-abelian theta functions for the gauge group $SU(2)$},  Fundamenta Mathematicae, {\bf 228}(215), 97--137.

	\bibitem{jones} V.F.R. Jones, \emph{Polynomial invariants of knots via von Neumann algebras}, Bull. Amer.
	Math. Soc., 12(1995), 103--111.
	
	\bibitem{kauffman} L. Kauffman, \emph{State models and the Jones polynomial}, Topology 26 no. 3(1987) 395--401.

        \bibitem{kirbymelvin} R. Kirby, P. Melvin, \emph{The 3-manifold invariants
          of Reshetikhin and Turaev for $sl(2,{\mathbb C})$}, Inventionnes Math., {\bf 105} (1991), 473--545. 


        \bibitem{lepaper} T.T.Q. Le, A.T. Tran, \emph{On the AJ conjecture for knots}, Indiana Univ. Math. J., {\bf 64}(2015), no. 4, 1103--1151.
          
          \bibitem{murakami} H. Murakami, \emph{Quantum $SU(2)$ invariants dominate Casson's $SU(2)$ invariant},
					Math. Proc. Cambridge Phil. Soc., {\bf 11}(1994) 253--281.
					
					\bibitem{murakami2} H. Murakami, J. Murakami,   \emph{The colored Jones polynomials and the simplicial volume of a knot}, Acta Math., {\bf 186}(2001), 85–-104.
					
	\bibitem{przytycki} J.H. Przytycki, \emph{Skein modules of 3-manifolds}, Bull. Pol. Acad. Sci. Math, {\bf 39}(1-2)(1991)
	91--100.

%\cite{przytyckisikora} J.H. Przytycki, A. Sikora, \emph{Skein algebra of a group}, Banach Center Publ. 42. 
        
      \bibitem{reshetikhinturaev} N.Yu.~Reshetikhin, V.G.~Turaev, \emph{Invariants of
        3-manifolds via link polynomials and quantum groups}, Inventionnes Math., {\bf 103}(1991), 547--597.

\bibitem{weil} A. Weil; \emph{Sur certains groupes d'operateurs unitaires}, Acta Math. \textbf{111} (1964), 143--211.
        
\bibitem{witten1989}  E. Witten, {\em Quantum field theory and 
the Jones polynomial},
Comm. Math. Phys., {\bf 121}(1989), 351--399.



\end{thebibliography}
\end{document}